\DeclareMathAlphabet{\mathsc}{OT1}{cmr}{m}{sc}
\newcommand{\lo}[1]{\raisebox{-0.1ex}{#1}\,}
\newcommand{\loo}[1]{\raisebox{-0.2ex}{#1}\,}
\newcommand{\Lo}[1]{\raisebox{-0.3ex}{#1}\,}
\newcommand{\Loo}[1]{\raisebox{-0.4ex}{#1}\,}
\newcommand{\LOO}[1]{\raisebox{-0.6ex}{#1}\,}
\newcommand{\C}{\mathbb C}
\newcommand{\N}{\mathbb N}
\newcommand{\eps}{\varepsilon}
\newcommand{\abs}[1]{\lvert #1 \rvert}
\newcommand{\norm}[1]{\lVert #1 \rVert}
\newcommand{\D}[1]{\mathrm{d}#1}
\newcommand{\e}{\mathrm{e}}
\newcommand{\mc}[1]{\mathcal{#1}}
\newcommand{\mf}[1]{\mathfrak{#1}}
\newcommand{\cat}[1]{\mathsc{#1}}
\DeclareMathOperator{\der}{der}
\DeclareMathOperator*{\colim}{colim}
\DeclareMathOperator{\ext}{ext}
\theoremstyle{definition}
\newtheorem{defn}{Definition}%[section]
\theoremstyle{plain}
\newtheorem{prop}[defn]{Proposition}%[section]
\newtheorem{thm}[defn]{Theorem}%[section]
\newtheorem{coro}[defn]{Corollary}%[section]
\theoremstyle{remark}
\newtheorem{ex}[defn]{Example}%[section]
\newtheorem{rk}[defn]{Remark}%[section]
\title{%On the Operator Algebraic Approach \\ to Complex Geometry
%On the Construction of Complex Manifolds from  Observable Schemes
On Complex Manifolds \\ and Observable Schemes
}
\author{Rodrigo Vargas Le-Bert\footnote{
%+56 (0) 71 200447, 
\Letter {\tt rvargas@inst-mat.utalca.cl}. Supported by Fondecyt Postdoctoral Grant N\textordmasculine 3110045. 
} }
\affil{
Instituto de Matemática y Física, Universidad de Talca \\
Casilla 747, Talca, Chile
}
\date{March 26, 2012}
\begin{document}
\maketitle

\begin{abstract}
We work out the construction of a Stein manifold from a commutative Arens-Michael algebra, under assumptions that are mild enough for the process to be useful in practice. 
%This fills a gap in the mathematical foundation for the study of complex geometry by operator algebraic methods.
Then, we do the passage to arbitrary complex manifolds by proposing a suitable notion of \emph{scheme.} We do this in the abstract language of \emph{spectral functors,} in view of its potential usefulness in non-commutative geometry.
\medskip \\
{\bf 2010 MSC}: 32Qxx (primary); 13J07, 14A22 (secondary).
\end{abstract}

\tableofcontents

\newpage

\section{Introduction}

The duality between  spaces and  observable algebras can be counted among the most fruitful themes in mathematics and physics. It has different incarnations, depending on the kind of space one is interested in: algebraic varieties and polynomial rings, topological spaces and commutative C*-algebras, and quantum spaces and non-commutative operator algebras, just to name some of the best known examples.  Thus, each category of spaces has a corresponding category of observable algebras. %but not every instance of this general principle has been entirely worked out. 
This paper grew out from the need of using that correspondence in the problem of constructing  certain underlying geometric space associated to a Lie algebra representation. More precisely, what was needed was an explicit set of minimal, easily verifiable hypothesis on the representation ensuring that the resulting space was a complex manifold. It came as a surprise that, although much is known about the subject, no sufficiently simple characterization of the algebras of holomorphic functions on a complex manifold was available.

It is interesting to note how our approach stands in relation to that of algebraic geometry. It turns out that the construction mentioned above produces, given a Lie algebra representation, a variety which is naturally embedded in a Hilbert space. 
%This might be seen as the main difference, as algebraic varieties are embedded in finite dimensional vector spaces. 
Now, as there exist discontinuous linear functions defined on any infinite dimensional Hilbert space, the corresponding ring of polynomial functions admits discontinuous characters,\footnote{Do not confuse this with the existence of discontinuous characters defined all over the completion of this ring, which is a Fréchet algebra. The statement that characters of a Fréchet algebra are automatically continuous is Michael's conjecture, hopefully proved in the unpublished article~\cite{m:sten98}.} in contrast with the finite dimensional case of algebraic geometry. Thus, the  topology of the ring must be taken into account, and accordingly we place ourselves within the framework of Arens-Michael algebras.

Regarding known results,
there are several characterizations of algebras of holomorphic functions on complex spaces, see~\cite{m:Gold90}, but as we said above we have been unable to find in the literature one that fits our needs. The problem is that, generally, it is asked for conditions which seem hard to check in practice. Let us mention, for instance, Brooks result~\cite{m:Broo79} on the local existence of analytic structure in the spectrum of a uniform Fréchet algebra $A$, which asks (among several other things) that the algebra generated by the germs of Gelfand transforms $\hat a$ at a point, for $a\in A$, be a quotient of the algebra of germs of holomorphic functions. We hope to have come up with hypothesis that are generally more easily verified.
%
%It should also be mentioned that our approach relies heavily on the lattice of continuous seminorms of the Arens-Michael algebra in question, and in that regard it has some relationship with the theory of Berkovich spaces~\cite{m:Berk90}.

Let us briefly describe the contents of the paper.
In the second section we review some basic facts about Arens-Michael algebras. 
%including the Gelfand spectrum. 
In the third one, we give a set of geometric hypothesis on a commutative Arens-Michael algebra which we prove to be necessary and sufficient for its character space  to be a Stein manifold. 
%Although it was not in our original interest, 
We have attempted to formulate those hypothesis in such a way that commutativity can be dropped, with partial success. In the fourth and final section, we do the passage to the case of arbitrary complex manifolds. Although it is just a matter of ``gluing the affine pieces'', we wanted to have a theory that paralleled the elegance of schemes in algebraic geometry, and the crucial step is defining the right analogue of the Zariski topology. Keeping up  with our attempt to formulate things in such a way that commutativity is not essential, we define our topology using the novel notion of \emph{spectral functor,} meant to isolate the properties that a non-commutative, functorial spectrum must have in order for our theory to be applicable.

\section{Arens-Michael algebras}

All over this paper, $A$ will be an element of the category $\cat{amAlg}$ of unital Arens-Michael algebras, i.e.\ topological algebras whose topology is induced by a family
%$\Set{\norm{\,}_p | p\in\mc P}$
%$\{\norm{\,}_p\}_{p\in\mc P}$ 
of  submultiplicative, unital seminorms. In this section we recall some known facts about them which will be used in the sequel.

\subsection{Seminorms}

Let $P=\mc P(A)$ be the set of all continuous, submultiplicative,  unital seminorms $p$ on $A$. 
It is partially ordered by the relation
\[
p\precsim q\quad \text{if, and only if,}\quad (\exists C>0)\ p\leq Cq,
\]
where we say that $p\leq q$ whenever $p(a)\leq q(a)$, for every $a\in A$ (this is also a partial order on $P$, but whenever we refer to $P$ as a partially ordered set, 
%or a category with exactly one morphism $p\rightarrow q$ when $p\precsim q$, 
we will have $\precsim$ in mind).
Given $p,q\in  P$, the function
\[
(p\vee q)(a) = \max\{ p(a), q(a) \}
\]
is a continuous, submultiplicative,  unital seminorm. %with $(p\vee q)(1)=1$. 
Thus, $P$ is a directed system. It is clearly \emph{saturated:} if $p\in P$ and $q\precsim p$, then $q\in P$. If $Q\subseteq P$ is pointwise bounded, then we define
\[
\bigl(\bigvee Q\bigr)(a) = \sup_{q\in Q} q(a).
\]
This seminorm does not necessarily belong to $P$ (we can only be sure that it is lower semi-continuous). However, if there exists $p\in P$ and $C>0$ such that $q\leq Cp$ for all $q\in Q$, then $\bigvee Q\precsim p$ and, in particular, $\bigvee Q\in P$.
%\begin{hypo} \label{bigvee Q}
%If $Q\subseteq P$ is pointwise bounded, then $\bigvee Q\in P$. Note that this will be the case if $A$ is barreled and, in particular, if $A$ is Fréchet.
%\end{hypo}

\begin{defn}
Given $p,q\in P$, its minimum is
\[
%(p\wedge q)(a) = %\sup\Set{r(a) | r\in  P  ,\ r\leq p,\ r\leq q},\quad a\in A.
	%\sup_{\set{r\in P | (\forall b\in A)\ r(b)\leq \min\{p(b),q(b)\}}} r(a),\quad a\in A.
	%\sup\Set{r(a) | r\in P \text{ such that } (\forall b\in A)\ r(b)\leq\min\{ p(b),q(b) \}},\quad a\in A.
p\wedge q = \bigvee \Set{r\in P | r\leq p,\ r\leq q }.
\]
%If  $\set{r\in P | (\forall a\in A)\ r(a)\leq \min\{p(a), q(a)\}} = \emptyset$, then we set $p\wedge q = 0$, which is not unital but for convenience will be admitted in $ P $.
We will follow the convention that $\bigvee\emptyset \equiv 0$,
%which is obviously a continuous, submultiplicative seminorm. 
which is not unital but for convenience will be admitted in $P$. Thus, $p\wedge q$ always belongs to $P$.
\end{defn} 

Given $p\in P$, we let $A_p$ be the Banach algebra
\[
A_p = \text{completion of } A/N_p\lo, \quad N_p = \Set{a\in A| p(a) = 0}.
\]
When $p\precsim q$,  there exists a Banach algebra morphism $\pi_{pq}: A_q\rightarrow A_p$\lo, and the construction is functorial. The morphism $\pi_{pq}$ has dense range and $\set{A_p | p\in P}$ is, whence, a dense directed system of Banach algebras.
Using the Arens-Michael representation, 
\[
A \cong \lim_{p\in P} A_p
 	= \Bigl\{ (a_p)\in\prod_{p\in P} A_p \Bigm\vert \pi_{pq}(a_q) = a_p \Bigr\}.
\] 
%\begin{rk} 
%$\prod_{p\in\mc P_0} A_p$ is a Fréchet algebra containing $A = \varprojlim A_p$ as a closed subalgebra.
%\end{rk}
%\rojo{which is a closed subalgebra of $\prod_{p\in\mc P} A_p$\raisebox{-0.2ex}.}
We will write $\pi_p:A\rightarrow A_p$ for the canonical morphisms of the projective limit and, by a harmless abuse of notation, identify $A_p^*$ with $\pi_p^*(A_p^*)\subseteq A^*$. We recall here the abstract Mittag-Leffler theorem: the range of $\pi_p$ is dense in $A_p$\lo, see \cite[Lemma 3.3.2]{m:Gold90}, for instance.

\subsection{The Gelfand spectrum}

In this subsection we merely collect a few facts regarding the spectrum of a commutative topological algebra, which plays a central role in its geometric study. As references, we mention \cite{m:Mall86, m:Frag05}.

Let $M=\mc M(A)$ be the Gelfand spectrum of  $A$, i.e.\ the set of continuous characters
\[
M = \Set{ \chi\in A^* | (\forall a,b\in A)\ \chi(ab) = \chi(a)\chi(b) }
\]
equipped with (the trace of) the $\sigma(A^*, A)$-weak topology. There is a bijective correspondence between continuous characters $\chi\in M$ and closed maximal ideals $\mf m$ of $A$, given by $\mf m = \ker \chi$.
Given $p\in P$, we will write $M_p\subseteq M$ for the subset of $p$-continuous characters. Note that $M=\bigcup_{p\in P} M_p$\loo. In particular, if $A$ is commutative, then $M\neq\emptyset$.\footnote{The spectrum of an arbitrary topological algebra can be empty.} 

Consider some $a\in A$. Its Gelfand transform is the function $\hat a: M\rightarrow \C$ defined by
\(
\hat a(\chi) = \chi(a).
\)
Thus, the Gelfand transform $\mc G:a\in A\mapsto \hat a\in C(M)$ is a unital algebra morphism, which is injective if, and only if, $A$ is semisimple (and then it is isomorphic to a subalgebra of $C(M)$). If $A$ is Fréchet (i.e.\ its topology can be defined by a countable family of seminorms), then the Gelfand transform is automatically continuous.

\section{From algebras to Stein manifolds}

\subsection{Geometric hypothesis} \label{geometric hypothesis}

In this subsection we introduce and discuss the hypothesis on $A$ which allow us to represent it as an algebra of analytic functions on a Stein manifold. They are six: commutativity, semisimplicity, local exponentiability, non-singularity, finite dimensionality, and a  technical hypothesis asking that given a closed maximal ideal $\mf m\subseteq A$, there exists a basis of the cotangent space $\mf m/\mf m^2$ which gives, upon taking representatives in $\mf m$ of its elements, a set of topological generators for~$A$. 
%They will be discussed one by one in the following.

This is no place for much commentary on the hypothesis of commutativity,
%Commutativity does not need much commentary, 
except perhaps for a remark on the problems arising upon trying to drop it. In the framework we have chosen to work with, the difficulty lies in reformulating the notion of regularity and the technical hypothesis mentioned above, as we will soon see. 

Semisimplicity is just to ensure that the Gelfand transform is injective. Thus, it is not essential, and one could drop it if a notion of ``observable'' more general than that of functions on a manifold is welcome.\footnote{And, correspondingly, a notion of ``geometry'' more general than that of a manifold.}

The local exponentiability hypothesis concerns derivations.
Recall that a \emph{derivation} of $A$ is a linear map $\delta:A\rightarrow A$ such that
\[
\delta(ab) = \delta(a)b + a\delta(b),\quad a,b\in A.
\]
Given a character $\chi\in M$, a \emph{derivation over $\chi$} is a linear map $\delta_\chi:A\rightarrow \C$ such that
\[
\delta_\chi(ab) = \delta_\chi(a)\chi(b) + \chi(a)\delta_\chi(b),\quad a,b\in A.
\]
The $A$-modules of continuous derivations and continuous derivations over $\chi$ will be denoted  $\der A$ and $\der_\chi A$, respectively. 
Local exponentiability of derivations is what determines the fact that we are dealing with a complex analytic geometric structure. 

\begin{defn}
We say that $A$ is \emph{locally exponentiable} if given $\delta\in\der A$ and $\chi_0\in M$, there exists a neighborhood $U\ni\chi_0$ such that the series
\[
z\mapsto \sum_{k\in\N} \frac{z^k}{k!}\chi\circ\delta^k(a)
\]
converges, for every $\chi\in U$, $a\in A$ and $\abs z< R$, where $R=R(\delta, U)>0$.
\end{defn}

We also need a hypothesis ensuring that $M$ has no singularities. A fact that characterizes singular points is that the dimension of their tangent space is larger than it should. In particular, at a singular point $\chi\in M$, there exist tangent vectors that cannot be extended as vector fields on a neighborhood of~$\chi$. In order to adopt this as a definition,
we use the canonical map $\der A\rightarrow \der_\chi A$ given by $\delta\mapsto \chi\circ\delta$.
\begin{defn}
We say that $A$ is \emph{non-singular} 
%\rojo{es este un buen nombre, considerando la terminologia en geom alg?} 
if the canonical map $\der A\rightarrow \der_\chi A$ is surjective, for all $\chi\in M$.
\end{defn}
\begin{rk}
This definition does not pass directly to the non-commutative case, because there the set of multiplicative characters is not a good replacement of the spectrum. One way to get over this difficulty, by the usual method of avoiding any mention of ``points'', would be to say that $A$ is non-singular if $\der A$ is projective and finitely generated. However, it is not known whether this is equivalent to the above in the commutative case (a problem 
%known as the Zariski-Lipman conjecture, \rojo{citar algo? Ademas, esa no es precisamente la conjetura... Ver tb conj Nakai.}
that seems related to the Nakai and Zariski-Lipman conjectures).
\end{rk}

Let $\chi\in M$ and $\mf m = \ker\chi$. In order to ensure that $M$ is finite dimensional, we  suppose that $\mf m/\mf m^2$ is finite dimensional. 
Finally, we make the following technical hypothesis: 
there exists a set of topological generators $\{a_i\}\subseteq A$ such that $\bigl\{ [a_i-\chi(a_i)] \bigr\}$ is a basis of $\mf m/\mf m^2$.
An important question that we leave unanswered is to what extent this hypothesis can be weakened,
%Presently we see no reason why it should not be entirely superfluous. 
%If it turns out to be unavoidable, then an important problem would be  to reformulate it in a manner suitable for the non-commutative case.
and how it can be reformulated in a manner suitable for generalization to the non-commutative case.

%\azul{Quizas dejar remark sig pa la intro:}
\begin{rk}
%Locally finite generation gets us closer to our  goal of ensuring that $M$ is a Stein manifold, as
Determining the spectrum of topologically finitely generated Fréchet algebras is equivalent to determining the polynomially convex hull of compact subsets of~$\C^n$, see \cite[Chapter 5]{m:Gold90}.
\end{rk}

\subsection{The reconstruction theorem}

The following result is, basically, an adaptation of Connes' openness lemma~\cite{m:Conn08} to our context.

\begin{thm} \label{reconstruction}
Under the geometric hypothesis on $A$ above, every $\chi_0\in M$ has a  neighborhood $U\ni \chi_0$ which is homeomorphic to $\C^n$, and the change of coordinate maps are holomorphic. Thus, $M$ is a Stein manifold.
\end{thm}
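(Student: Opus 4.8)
The plan is to construct, around an arbitrary $\chi_0\in M$, an explicit chart out of the data furnished by the three analytic hypotheses. Write $\mf m=\ker\chi_0$ and let $n=\dim\mf m/\mf m^2$, finite by the finite-dimensionality hypothesis. The technical hypothesis gives topological generators $a_1,\dots,a_n$ of $A$ whose classes $[a_i-\chi_0(a_i)]$ form a basis of $\mf m/\mf m^2$. Since a point derivation over $\chi_0$ annihilates $\mf m^2$ and the constants, the space $\der_{\chi_0}A$ is canonically the dual $(\mf m/\mf m^2)^*$; I would pick the basis $t_1,\dots,t_n$ dual to $\{[a_i-\chi_0(a_i)]\}$ and, invoking non-singularity, lift each $t_j$ to a global derivation $\delta_j\in\der A$ with $\chi_0\circ\delta_j=t_j$. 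Local exponentiability then lets me define, for $\chi$ near $\chi_0$ and small $z$, the functional
\[
\phi^{\delta}_z(\chi)(a)=\sum_{k\in\N}\frac{z^k}{k!}\,\chi(\delta^k a);
\]
a Leibniz-and-Cauchy-product computation shows it is multiplicative, so $\phi^{\delta}_z(\chi)\in M$, i.e.\ it is the flow of $\delta$. Composing the time-$z_j$ flows of the $\delta_j$ and evaluating at $\chi_0$ yields a map $\Psi$ from a neighborhood $V$ of $0\in\C^n$ into $M$, to be compared with the coordinate map $\Phi=(\hat a_1,\dots,\hat a_n):M\to\C^n$.

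Next I would show that $\Phi$ and $\Psi$ are, respectively, globally injective and a local homeomorphism, and that they fit together. Global injectivity of $\Phi$ is immediate: two characters agreeing on the topological generators $a_i$ agree on a dense subalgebra, hence everywhere by continuity. For $\Psi$, consider $F=\Phi\circ\Psi:V\to\C^n$. Each component $z\mapsto\hat a_i(\Psi(z))$ is, by construction, a convergent power series in $z$, so $F$ is holomorphic; differentiating the defining series at $z=0$ gives $\partial_{z_j}(\hat a_i\circ\Psi)(0)=t_j(a_i)$, which by the choice of dual bases equals $1$ when $i=j$ and $0$ otherwise. Hence $dF_0$ is the identity and the finite-dimensional holomorphic inverse function theorem makes $F$ a biholomorphism of a neighborhood $V_0\ni0$ onto an open set $W\subseteq\C^n$.

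The passage from this finite-dimensional statement to a genuine chart on $M$ is the adaptation of Connes' openness lemma and, I expect, the main obstacle, since the weak-$*$ topology on $M$ is a priori badly behaved. The key is that injectivity of $\Phi$ forces $\Psi(V_0)=\Phi^{-1}(W)$: if $\Phi(\chi)\in W=\Phi(\Psi(V_0))$ then $\Phi(\chi)=\Phi(\Psi(z))$ for some $z$, whence $\chi=\Psi(z)$. As $\Phi$ is continuous, $\Phi^{-1}(W)$ is open in $M$, so $U:=\Psi(V_0)$ is an open neighborhood of $\chi_0$, and $\Phi|_U:U\to W$ is a continuous bijection whose inverse $\Psi\circ F^{-1}$ is continuous because each evaluation $z\mapsto\Psi(z)(a)$ is a convergent power series; thus $\Phi|_U$ is a homeomorphism onto an open subset of $\C^n$, which after shrinking to a ball is homeomorphic to $\C^n$. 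For two overlapping charts the transition map reads off the functions $\hat b$ in terms of the $\hat a_i$; since every Gelfand transform $\hat b$ becomes, under $\Psi$, a convergent power series in the flow parameters (the same series as above with $a$ replaced by $b$) and is therefore holomorphic in the local coordinates, the change-of-coordinate maps are holomorphic. Thus $M$ is a complex manifold.

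Finally, to see that $M$ is Stein I would use separation and holomorphic convexity. Semisimplicity makes $\mc G$ injective and the $\hat a_i$ already separate points, so $M$ is holomorphically separable; moreover $M=\bigcup_p M_p$ is exhausted by the spectra $M_p=\mc M(A_p)$ of the Banach completions, which are compact and $\hat A$-convex, providing the holomorphically convex exhaustion required for Steinness. The one genuinely delicate point throughout, beyond bookkeeping, is the openness argument of the third paragraph: everything else is either a formal multiplicativity and convergence check licensed by local exponentiability, a finite-dimensional inverse function theorem, or the standard exhaustion characterization of Stein manifolds.
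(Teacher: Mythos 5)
Your proposal follows essentially the same route as the paper: lift a dual basis of $\der_{\chi_0}A\cong(\mf m/\mf m^2)^\dagger$ to global derivations via non-singularity, exponentiate them using local exponentiability to get a local parametrization $\Psi$ (the paper's $\varphi$), compose with the evaluation map $\chi\mapsto(\chi(a_i))$ (the paper's $\psi$), apply the finite-dimensional inverse function theorem at $0$, and use injectivity of the evaluation map on the topological generators to identify the chart domain as $\psi^{-1}(W)$. Your writeup is in fact somewhat more explicit than the paper's at the points it leaves implicit (multiplicativity of the exponentiated functionals, the openness argument, and the Stein property), but the underlying argument is the same.
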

\begin{proof}
Write $\mf m=\ker\chi_0$\lo,
let $\{a_i\}\subseteq A$ be a topologically generating set such that $\bigl\{ [a_i-\chi_0(a_i)] \bigr\}$ provides a basis of $\mf m/\mf m^2$, and let $\{\delta_i\}$ be a dual basis of $\der_\chi A \cong (\mf m/\mf m^2)^\dagger$, meaning that one has
\[
\chi_0(\delta_i(a_j)) = \delta_{ij}\loo,\quad i,j\in\{1,\dots,n\}.
\] 
By regularity,  each $\delta_i$ is the image under $\der A\rightarrow \der_{\chi_0} A$ of a derivation $A\rightarrow A$ which we will still denote by $\delta_i$\loo. By local exponentiability, the power series 
\[
\varphi(z) = \sum_{\alpha\in\N^n} \frac{z^\alpha}{\alpha!} \chi_0\circ \delta_1^{\alpha_1}\circ\cdots\circ \delta_n^{\alpha_n} 
\]
converges pointwise on a polydisc $D\subseteq \C^n$, thus defining a map $\varphi:D\rightarrow M$. Now, consider the map $\psi:M\rightarrow \C^n$ given by $\chi\mapsto (\chi(a_i))$. By construction,
\[
(\psi\circ\varphi)'(0) = \text{Id},
\]
and thus $\psi\circ\varphi$ restricts to a holomorphic bijection between the open sets
\[
0\in V\subseteq \C^n \rightarrow (\chi_0(a_i)) \in W\subseteq \C^n.
\]
We let $U = \psi^{-1}(W)\subseteq M$. In order to conclude that the family of maps $\psi:U\rightarrow \C^n$ thus obtained provides a holomorphic atlas on $M$, it suffices to show that $\psi$ is injective, and this is because different characters cannot  take the same values on all of the generating set $\{a_i\}$.
\end{proof}

%\azul{Falta: como es que (suponiendo semisimplicidad) $A$ es el algebra de fn's holomorfas sobre $M$. Aunque no es para nada fundamental.}

\begin{rk}
One nice property of the geometric hypothesis is that analiticity is neatly isolated: if one was interested in smooth manifolds, it would suffice to ask for \emph{local integrability,} as opposed to the stronger notion of local exponentiability, of the differential equation
\(
\frac{\D}{\D t} \chi_t = \delta^*\chi_t\loo.
\)
\end{rk}

\section{Spectra and schemes}

After Theorem~ \ref{reconstruction}, the next step is passing from the affine case to schemes.\footnote{There exists the notion of \emph{Gelfand sheaf} associated to a topological algebra~\cite{m:Mall98}, but it cannot be considered a scheme for it lacks a localization method.} In order to do so, we propose an analogue of the Zariski topology in the context of Arens-Michael algebras. Our definition depends essentially only on the lattice of continuous seminorms of $A$, making it potentially useful in non-commutative settings.
%For that reason, we have chosen to formulate it in an abstract way, using the notion of \emph{spectral functor.} 
We have attempted to leave that possibility open, but that does not come for free, as we explain in the next paragraph.

%The introduction of spectral functors deserves more explanation. 
%We feel some explanation regarding our motivation to introduce spectral functors should be given.
The geometric approach to an algebraic object requires a theory of the corresponding dual object, or structure space. In the case of a commutative C*-algebra, the dual object is given by the Gelfand spectrum, but once one enters the non-commutative world there is more than one candidate to fulfill that role, see for instance~\cite{m:Shul82, m:Fuji98}. %m:Soib09}. 
Since it is not our purpose  %(and we are in no position) 
to try and develop the ``right'' duality theory for non-commutative Arens-Michael algebras, we have decided to work with a minimal structure that fits our needs, hence the introduction of \emph{convex structures} on topological spaces and \emph{spectral functors.} We believe that these notions serve at least to focus in what matters for what we have to say: namely, that we have a proposal regarding the topology with which the dual object to an Arens-Michael algebra should be equipped. We do not claim more than that, and the only example of spectral functor that we will provide is the pure state space.

\subsection{Spectral functors}

\begin{defn} 
A \emph{convex structure} on a set $M$ is a subcategory 
\(
\sigma\subseteq\cat{Subset}(M) 
\)
which:
\begin{enumerate}
	\item Covers $M$,
	\item Is directed, 
	\item Is closed under finite intersections. 
\end{enumerate}
Elements of $\sigma$ will be called \emph{convex sets.}
A \emph{coarse map}  between spaces $(M,\sigma)$ and $(N,\tau)$ equipped with convex structures is a functor $\sigma\rightarrow \tau$\lo.
The category of convex-structured spaces and coarse maps will be denoted by $\cat{Conv}$. 
\end{defn}
\begin{defn} \label{spectral functor}
Let $\cat{sAlg}$ be some subcategory of $\cat{amAlg}$. A functor 
\[
A\in\cat{sAlg}\mapsto \bigl(\mc M(A),\sigma(A)\bigr)\in\cat{Conv}
\]
is said to be \emph{spectral} if, for each $A\in\cat{sAlg}$, there exists a morphism of directed sets $p\in \mc P(A)\mapsto \mc M_p(A)\in \sigma(A)$ such that, for all $p,q\in \mc P(A)$, one has:
\begin{enumerate}
	\item $\mc M_p(A)\subseteq \mc M_q(A)$ if, and only if, $p\precsim q$.
	\item 
\(
\mc M_{p\wedge q}(A) = \mc M_p(A)\cap \mc M_q(A).
\)
\end{enumerate}
\end{defn}
%\begin{rk}
%Suppose that given $p,q\in  P$ and $m\in \mc  M(p)\cap \mc M(q)$,  there exists an $r\in P$ such that
%\[
%%(\forall p,q\in P)(\forall f\in A_p^*\cap A_q^*)(\exists r\in P)\ 
%m\in \mc M(r)\subseteq \mc M(p)\cap\mc M(q).
%\]
%This assumption, together with the fact that  $A$ is barreled by Hypothesis~\ref{A is Frechet}, implies  Condition 2 above.
%\end{rk}

The most important example of a spectral functor is given by the Gelfand spectrum, which is actually more than that: it is a functor between the categories of commutative Arens-Michael algebras and topological spaces. We consider the weaker notion above in order to accommodate for the simplest-minded generalization to the non-commutative setting, namely pure state spaces.\footnote{Note that, in the terminology of representation theory, \emph{characters} are what we call \emph{states,} the word ``character'' being reserved here for \emph{multiplicative} states.} We start our exposition by recalling the theory in the context of Banach algebras. Thus, during the next few paragraphs, $A$ will be a Banach algebra.

\begin{defn}
Let $A$ be a unital Banach algebra. %which without loss of generality will be assumed to be unital.
An $f\in A^*$ is called a \emph{state} if
\[
\norm f := \sup_{\norm a\leq 1}\abs{f(a)} = f(1).
\]
States will usually be normalized by $f(1) = 1$. We write $\mc S(A)$ for the set of all normalized states.
\end{defn}
\begin{rk}
%As suggested by our choice of notation, 
Being a state depends on the particular norm used to define the topology of~$A$. 
%Thus, we let $ P (B)$ be the family of all submultiplicative, unital and continuous norms on $B$ and introduce the sets
%\[
%\mc S_\cap(B) = \bigcap_{p\in P (B)} \mc S(B, p),\quad \mc S_\cup(B) = \bigcup_{p\in P (B)} \mc S(B, p).
%\]
Linear functionals $A\rightarrow \C$ which are states for any submultiplicative, unital norm equivalent to $\norm\cdot$ are called \emph{spectral states,} see \cite{m:Pann90, m:BonsDunc71} for more.
%\begin{prop}
%The elements $f\in\mc S_\cap(B)$, which are called \emph{spectral states,} 
They are characterized by the following equivalent properties:
\begin{enumerate}
\item For every $a\in A$, $f(a)$ belongs to the convex envelope of the spectrum
\[
\sigma(a) = \set{\lambda\in \C| a-\lambda \text{ is not invertible}}.
\]
\item For every $a\in A$, $\abs{f(a)}$ is bounded by the spectral radius
\[
\rho(a) = \sup_{\lambda \in\sigma(a)} \abs\lambda = \lim p(a^n)^{1/n}.
\]
\end{enumerate}
%Here, $\sigma(a) = \set{\lambda\in \C| a-\lambda \text{ is not invertible}}$ and $\rho(a) = \sup_{\lambda \in\sigma(a)} \abs\lambda = \lim p(a^n)^{1/n}$.
%\end{prop}
Spectral states are  appealing, but in non-commutative cases there are too few of them: for $A=\mathrm{M}_n(\C)$, the only spectral state is the trace. In particular, spectral states do not separate points.
\end{rk}

We now define \emph{pure states.} We need the following simple fact.
\begin{prop} \label{S(A) is compact}
$\mc S(A)$ is compact. 
%\rojo{And probably is an admissible exhaustion of $S$ (adapt Goldmann p. 93). Is it useful?}
\end{prop}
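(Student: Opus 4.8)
The plan is to realize $\mc S(A)$ as a weak-$*$ closed subset of the closed unit ball of $A^*$ and then invoke the Banach--Alaoglu theorem. First I would simplify the defining conditions. Since $A$ is unital with $\norm 1 = 1$, every $f\in A^*$ satisfies $\abs{f(1)}\leq \norm f$; consequently, imposing $f(1) = 1$ already forces $\norm f\geq 1$. This means the pair of equalities $\norm f = f(1) = 1$ is equivalent to the conjunction of an inequality and an equality, and I would record that
\[
\mc S(A) = \Set{ f\in A^* | \norm f\leq 1,\ f(1) = 1 }.
\]

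Next I would recall that the closed unit ball $B = \Set{ f\in A^* | \norm f\leq 1}$ is compact in the $\sigma(A^*, A)$-weak topology, by Banach--Alaoglu. It then remains to check that $\mc S(A)$ is closed inside $B$. For this I would use that evaluation at the unit, $f\mapsto f(1)$, is by definition $\sigma(A^*, A)$-continuous, so that $\Set{ f\in A^* | f(1) = 1}$ is the preimage of the closed singleton $\{1\}\subseteq\C$ and is therefore closed. Intersecting with $B$ exhibits $\mc S(A)$ as a weak-$*$ closed subset of a weak-$*$ compact set, hence compact.

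The hard part here is, frankly, negligible: the whole argument is a routine application of Banach--Alaoglu together with the continuity of a single evaluation functional. The only step deserving a moment's attention is the reduction of the normalization conditions, which relies on $\norm 1 = 1$; this is part of the standing assumption that the norm defining $A$ is submultiplicative and unital, so no additional hypothesis is needed. I would therefore expect the write-up to be short, with the main risk being merely the temptation to overstate the difficulty of a standard fact.
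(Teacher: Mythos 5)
Your proof is correct and follows essentially the same route as the paper: both reduce the statement to showing that $\mc S(A)$ is a $\sigma(A^*,A)$-closed subset of the unit ball of $A^*$ and then invoke Banach--Alaoglu. The only cosmetic difference is that you verify closedness by rewriting the defining conditions as $\norm f\leq 1$ and $f(1)=1$, whereas the paper runs a net argument to the same effect.
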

\begin{proof}
It suffices to check that $\mc S(A)$ is $\sigma(A^*, A)$-closed. So, let $\{f_i\}_{i\in I}$ be a net which is $\sigma(A^*, A)$-convergent to $f\in A^*$.  Given $a\in A$ with $\norm a\leq 1$,
\[
f(a) = \lim f_i(a) \leq \liminf \norm{f_i} = \liminf f_i(1) = 1,
\]
and thus $\norm f\leq 1$. We conclude by noting that $f(1) = \lim f_i(1) = 1$.
\end{proof}

\begin{defn}
Note that $\mc S(A)$ is convex. We define the \emph{pure state space} $\mc{PS}(A)$ 
to be the set of extreme points $\ext \mc S(A)$, which is non-empty because of Proposition~\ref{S(A) is compact}.
\end{defn}

\begin{rk}
In the commutative, C*-algebraic case, pure states are characters and the character space, being a closed subset of $\mc S(A)$, is compact. This is not so anymore in general, see~\cite{m:Saka59, m:Arch89},  and in certain cases it is necessary to take the closure of the set of pure states~\cite{m:Glim60}.
\end{rk}

We finish our review of Banach algebra state theory with the following result due to Moore~\cite{m:Moor71, m:Sinc71}.
\begin{prop} \label{Moore}
The  states $\mc{S}(A)$ span the dual $A^*$. In particular, pure states separate points of a Banach algebra. 
\end{prop}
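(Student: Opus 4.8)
The plan is to extract both assertions from a single quantitative fact: the numerical radius $v(a) := \sup_{f\in K}\abs{f(a)}$, where $K = \set{f\in A^* | f(1) = \norm f = 1}$ denotes the normalized states, is comparable to the norm, in the sense that $v(a)\geq c\,\norm a$ for a universal constant $c>0$. Granting this, no nonzero $a$ can be annihilated by all states, so $\mc S(A)$ separates the points of $A$; and, by the bipolar theorem, the weak$^*$-closed absolutely convex hull of $K$ is the polar of the $v$-unit ball $\set{a | v(a)\leq 1}\subseteq c^{-1}B_A$, hence contains a multiple of the unit ball of $A^*$. Thus the linear span of $\mc S(A)$ is weak$^*$-dense in $A^*$, which is the operative content of the first assertion and already yields separation.

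The substantive step, and the main obstacle, is the inequality $v(a)\geq c\,\norm a$, whose sharp form has $c = e^{-1}$ (Bohnenblust--Karlin; see \cite{m:BonsDunc71}). First I would identify the numerical range $V(a) = \set{f(a)|f\in K}$ with a one-sided derivative of the norm at the unit, namely $\max\set{\re\lambda | \lambda\in V(a)} = \lim_{t\to 0^+} t^{-1}(\norm{1+ta}-1)$. The bound $\re f(a)\leq t^{-1}(\norm{1+ta}-1)$ for $f\in K$ is immediate from $\norm f = f(1) = 1$, and the reverse inequality is obtained by taking a weak$^*$-cluster point of Hahn--Banach support functionals of $1+ta$, which converges to a normalized state as $t\to 0^+$. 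Replacing $a$ by $\zeta a$ with $\abs\zeta = 1$ then expresses $v(a)$ as $\sup_{\abs\zeta = 1}\lim_{t\to 0^+}t^{-1}(\norm{1+t\zeta a}-1)$. The norm itself is finally recovered from the entire function $\zeta\mapsto \exp(\zeta a)$ via the Cauchy estimate $\norm a \leq r^{-1}\sup_{\abs\zeta = r}\norm{\exp(\zeta a)}$; bounding $\norm{\exp(\zeta a)}$ by $\exp(\abs\zeta\, v(a))$ through the numerical-abscissa growth estimate and optimizing in $r$ produces exactly the factor $e$. This exponential estimate is where all the work lies.

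It remains to pass from separation by states to separation by \emph{pure} states. By Proposition~\ref{S(A) is compact}, $\mc S(A)$ is a weak$^*$-compact convex set, so Krein--Milman gives $\mc S(A) = \overline{\conv}\,\mc{PS}(A)$, the weak$^*$-closed convex hull of its extreme points. For a fixed $a\in A$ the evaluation $f\mapsto f(a)$ is weak$^*$-continuous and affine; hence, were it to vanish on every pure state, it would vanish on their closed convex hull, that is, on all of $\mc S(A)$, forcing $v(a) = 0$ and so $a = 0$ by the inequality above. Therefore $\mc{PS}(A)$ already separates the points of $A$, as claimed.
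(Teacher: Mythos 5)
The paper gives no proof of this proposition: it is quoted as a theorem of Moore (with Sinclair's proof), so your attempt has to be judged on its own merits rather than against an argument in the text. The second half of what you write is fine. The inequality $v(a)\geq \e^{-1}\norm a$ that you sketch is exactly the Bohnenblust--Karlin theorem, which the paper also quotes without proof as Proposition~\ref{BohnKarl}, and your outline (numerical range as the one-sided derivative of the norm at $1$, the support-functional cluster-point argument, the growth bound $\norm{\exp(\zeta a)}\leq\exp(\abs\zeta\,v(a))$, and the Cauchy estimate) is the standard route to it. Likewise the deduction that pure states separate points --- Krein--Milman applied to the weak$^*$-compact convex set $\mc S(A)$ of Proposition~\ref{S(A) is compact}, together with the weak$^*$-continuity and affineness of $f\mapsto f(a)$ --- is correct, and that clause is all the paper actually uses later.

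The gap is in the first assertion. Your bipolar argument shows only that the linear span of $\mc S(A)$ is weak$^*$-dense in $A^*$, and you explicitly downgrade the proposition to that (``the operative content''). Weak$^*$-density of a span is strictly weaker than spanning: in $\ell^1=c_0^*$ the finitely supported sequences form a weak$^*$-dense proper subspace, so no bipolar computation, which only ever produces weak$^*$-closed hulls, can finish the job by itself. The missing step is where the compactness \emph{and convexity} of $\mc S(A)$ must be used: any finite combination $\sum_i\nu_ig_i$ with $g_i\in\mc S(A)$ and $\sum_i\abs{\nu_i}\leq 1$ can be regrouped, by splitting each $\nu_i$ into real and imaginary and then positive and negative parts and renormalizing each group into a single convex combination, as $s_1h_1-s_2h_2+\I s_3h_3-\I s_4h_4$ with $h_j\in\mc S(A)$, $s_j\geq 0$ and $\sum_j s_j\leq\sqrt2$. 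The set of all such four-term expressions is a weak$^*$-continuous image of a compact set, hence weak$^*$-closed, and it is squeezed between the absolutely convex hull of $\mc S(A)$ and $\sqrt 2$ times that hull. Combined with your estimate $\e^{-1}B_{A^*}\subseteq\bigl(\set{v\leq 1}\bigr)^\circ$ this shows that every $f\in A^*$ is a linear combination of at most four states --- which is the actual content of Moore's theorem. Without some such argument the first sentence of the proposition remains unproved.
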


We are ready to let $A$ be again a unital Arens-Michael algebra.
Recall that $A^* = \bigcup A_p^*$ (to be precise, $A^* = \colim A_p^*$ equipped with the final locally convex topology~\cite{m:Mall86}, and we identify $A_p^*$ with its image under the natural map $A_p^*\rightarrow \colim A_p^*$). 
\begin{defn}
A \emph{state} of an Arens-Michael algebra $A$ is a linear functional $f\in A^*$ which is a state of $A_p$ for some $p\in\mc P(A)$.
The set of normalized states of $A$ will be written $\mc S(A) = \bigcup \mc S(A_p)$,
%(recall here that we are identifying $A_p^*$ with $\pi_p^*(A_p^*)\subseteq A^*$). 
%(to be precise, $S = \colim \mc S(p)$). 
Finally, we define the \emph{pure state space}  to be the union $\mc {PS}(A) = \bigcup_{p\in P} \mc{PS}(A_p)$. % = \bigcup_{p\in P} \ext S_p$\lo.
\end{defn}

\begin{rk} \label{S_q subseteq S_p}
One might wonder what happens if we take the intersection rather than the union in $\mc S(A)=\bigcup \mc S(A_p)$. Now, observe that
if $f\in \mc S(A_q)$ and $p\geq q$, then 
\[
\sup_{p(a)\leq 1} \abs{f(a)} \leq \sup_{q(a)\leq 1} \abs{f(a)} = f(1),
\] 
where the supremums are taken over $a\in A$. Using the Mittag-Leffler theorem, we conclude that $f\in \mc S(A_p)$. %and $\mc S:P\rightarrow \cat{Set}$ is a functor. In particular,  $S_p = \bigcup_{q\leq p} S_q$\lo.
Thus, taking the intersection leads to the union of all \emph{spectral states} of the $A_p$'s.
\end{rk}
\begin{rk}
Our definition of $\mc{PS}(A)$ seems somewhat arbitrary, inasmuch as pure states should be extreme points of $\mc S(A)$. However, pursuing the Choquet theory of Arens-Michael algebra state spaces would take us too far afield.
\end{rk}
\begin{rk}
Pure states separate points. Indeed,
by Proposition~\ref{Moore}, $\ext \mc S(A_p)$ separates points of $A_p$\lo. Since for every $a\neq 0$ there exists $p\in P$ such that $p(a)\neq 0$, the claim follows.
\end{rk}

%The point of introducing pure states is to show that they are an example of spectral functor. Thus, for the remainder of this section, we will let
%\[
%\mc M(A) = \mc{PS}(A),\quad \mc M_p(A) = A_p^*\cap\mc {PS}(A).
%\]

The following result, needed to prove that pure states define a spectral functor, is due to Bohnenblust and Karlin~\cite{m:BohnKarl55}. 
\begin{prop} \label{BohnKarl}
Each seminorm $p\in P$ is equivalent to
\[
\norm a_{\mc S(A_p)} = \sup_{f\in \mc S(A_p)} \abs{f(a)},\quad a\in A.
\]
%(which is not necessarily submultiplicative). 
More precisely, one has that
\(
\norm a_{\mc S(A_p)}\leq p(a) \leq \e\norm a_{\mc S(A_p)}\Loo.
\)
\end{prop}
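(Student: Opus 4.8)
The plan is to recognize $\norm a_{\mc S(A_p)}$ as the \emph{numerical radius} of (the image of) $a$ in the unital Banach algebra $A_p$, so that the statement becomes exactly the classical theorem of Bohnenblust and Karlin on the equivalence of the norm with the numerical radius. It suffices to prove the assertion in $A_p$, whose norm we continue to denote by $p$ and whose unit has norm $1$; write $v(a) = \norm a_{\mc S(A_p)}$. The inequality $v(a) \le p(a)$ is immediate, since every $f \in \mc S(A_p)$ has $\norm f = 1$ and hence $\abs{f(a)} \le p(a)$. All the content lies in the reverse bound $p(a) \le \e\, v(a)$.

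The key intermediate object is the \emph{numerical range} $V(a) = \set{ f(a) | f \in \mc S(A_p) }$, a compact convex subset of $\C$ (compact by Proposition~\ref{S(A) is compact}) with $v(a) = \sup_{z \in V(a)}\abs z$. The first step is the differential characterization
\[
\max \re V(a) = \lim_{t\downarrow 0}\frac{p(1 + ta) - 1}{t}.
\]
That the right-hand limit exists and equals the infimum over $t > 0$ follows from convexity of $t \mapsto p(1 + ta)$. The inequality $\le$ is easy: for a state $f$ one has $1 + t\re f(a) = \re f(1 + ta) \le p(1 + ta)$. The inequality $\ge$ is where Hahn--Banach enters: the directional right-derivative $q(x) = \lim_{t\downarrow 0} t^{-1}\bigl(p(1 + tx) - 1\bigr)$ is a sublinear functional dominated by $p$, so there is a real-linear $\ell \le q$ with $\ell(a) = q(a)$; checking $\ell(1) = 1$, $\ell(\I\cdot 1) = 0$ and $\abs\ell \le p$ shows that its complexification $f(x) = \ell(x) - \I\,\ell(\I x)$ is a state with $\re f(a) = \max \re V(a)$.

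From the differential formula I would derive the exponential estimate
\[
p\bigl(\exp(\lambda a)\bigr) \le \e^{\abs\lambda\, v(a)}, \quad \lambda \in \C.
\]
For $\lambda = t \ge 0$ this comes from $\phi(t) := p(\exp(ta))$ satisfying $\phi(t + h) \le p(\exp(ha))\,\phi(t)$ together with $p(\exp(ha)) = 1 + h\max\re V(a) + o(h)$, which yields the right-derivative inequality $\phi' \le (\max\re V(a))\,\phi$ and hence, by Gronwall, $\phi(t) \le \e^{t\max\re V(a)}$; replacing $a$ by $\e^{\I\theta}a$ and using $\max\re(\e^{\I\theta}V(a)) \le v(a)$ upgrades this to all complex $\lambda$. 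Finally, extracting $a$ as the first Taylor coefficient through the Cauchy formula,
\[
a = \frac{1}{2\pi\I}\oint_{\abs\lambda = r}\frac{\exp(\lambda a)}{\lambda^2}\,\D\lambda,
\]
and estimating with the exponential bound gives $p(a) \le r^{-1}\e^{r v(a)}$ for every $r > 0$; optimizing at $r = 1/v(a)$ produces the constant $\e$, i.e.\ $p(a) \le \e\, v(a)$ (the case $v(a) = 0$ forcing $p(a) = 0$).

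The main obstacle is the differential characterization of $\max\re V(a)$, and specifically the Hahn--Banach construction of a state that \emph{attains} it: the easy half of each step is the bound $v \le p$ and the domination $\ell \le q$, whereas producing the optimal state and verifying that its complexification is genuinely normalized is the delicate part. Everything afterwards — the Gronwall estimate and the Cauchy-integral optimization — is routine, the only arithmetic subtlety being that the extremal radius $r = 1/v(a)$ is precisely what makes the constant come out to $\e$ rather than something larger.
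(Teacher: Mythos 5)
Your proof is correct. The paper offers no argument of its own for this proposition---it simply cites Bohnenblust and Karlin---and what you have written out (the differential characterization $\max\re V(a)=\lim_{t\downarrow 0}t^{-1}(p(1+ta)-1)$ with the Hahn--Banach construction of an attaining state, the exponential bound $p(\exp(\lambda a))\leq \e^{\abs{\lambda}\norm{a}_{\mc S(A_p)}}$, and the Cauchy-integral extraction optimized at $r=1/\norm{a}_{\mc S(A_p)}$) is precisely the classical proof from that reference, so there is nothing to compare beyond noting that your reduction to the Banach algebra $A_p$ is legitimate because $p(a)=\norm{\pi_p(a)}_{A_p}$ and the seminorms are unital.
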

\begin{coro} \label{p equiv norm M_p}
Let $\mc M_p(A) = A_p^*\cap\mc{PS}(A)$. Each seminorm $p\in\mc P(A)$ is equivalent to
\[
\norm{a}_{\mc M_p(A)} =  \sup_{f\in\mc M_p(A)} \abs{f(a)}.
\]
\end{coro}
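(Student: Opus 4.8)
The plan is to reduce the statement to Proposition~\ref{BohnKarl} by sandwiching $\mc M_p(A)$ between the pure states $\mc{PS}(A_p) = \ext\mc S(A_p)$ and the full state space $\mc S(A_p)$, and then observing that restricting the supremum $\sup_f\abs{f(a)}$ from $\mc S(A_p)$ to its extreme points changes nothing. Concretely, the first step is to establish the chain of inclusions
\[
\mc{PS}(A_p) \subseteq \mc M_p(A) \subseteq \mc S(A_p).
\]
The left-hand inclusion is immediate from the definitions: $\mc{PS}(A_p) = \ext\mc S(A_p)$ is by construction a piece of $\mc{PS}(A) = \bigcup_q \mc{PS}(A_q)$, and every element of $\mc S(A_p)$ lies in $A_p^*$, so $\mc{PS}(A_p) \subseteq A_p^*\cap\mc{PS}(A) = \mc M_p(A)$.

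For the supremum I would invoke Bauer's maximum principle. Since $\mc S(A_p)$ is convex and, by Proposition~\ref{S(A) is compact}, $\sigma(A_p^*,A_p)$-compact, and since $f\mapsto\abs{f(a)}$ is a weak*-continuous convex function for each fixed $a$, its maximum over $\mc S(A_p)$ is attained at an extreme point. Hence
\[
\norm a_{\mc S(A_p)} = \sup_{f\in\mc S(A_p)}\abs{f(a)} = \sup_{f\in\ext\mc S(A_p)}\abs{f(a)} = \norm a_{\mc{PS}(A_p)}.
\]
Combining this with the inclusions above sandwiches $\norm a_{\mc M_p(A)}$ between two quantities that coincide, forcing $\norm a_{\mc M_p(A)} = \norm a_{\mc S(A_p)}$. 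The asserted equivalence with $p$ then drops out of the quantitative estimate in Proposition~\ref{BohnKarl}, namely $\norm a_{\mc S(A_p)} \leq p(a) \leq \e\,\norm a_{\mc S(A_p)}$.

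The step I expect to be the main obstacle is the right-hand inclusion $\mc M_p(A)\subseteq\mc S(A_p)$. A priori, an element $f\in\mc M_p(A)$ is only known to be a pure state of \emph{some} $A_q$ that happens to be $p$-continuous, so one still has to promote it to a genuine state of $A_p$, i.e.\ verify $\norm f_{A_p^*} = f(1)$. The inequality $f(1)\leq\norm f_{A_p^*}$ is automatic from unitality of $p$; the reverse is the delicate one. I would attack it through a comparison of the $p$- and $q$-norms in the spirit of Remark~\ref{S_q subseteq S_p}, passing to a common refinement and then using the density of the range of $\pi_p$ (the Mittag-Leffler theorem) to transport the state estimate back down to $A_p$. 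This is precisely the point at which the interplay between the pointwise order $p\leq q$ and the domination order $\precsim$ on seminorms must be kept track of carefully, since it is the pointwise comparison, rather than mere equivalence of seminorms, that controls whether a state of one completion survives as a state of another.
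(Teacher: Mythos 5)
Your overall strategy is the paper's: the lower bound comes from $\ext\mc S(A_p)\subseteq\mc M_p(A)$ together with the fact that the supremum of $f\mapsto\abs{f(a)}$ over $\mc S(A_p)$ is computed on the extreme points, after which Proposition~\ref{BohnKarl} finishes. Your appeal to Bauer's maximum principle is a repackaging of the paper's step that $\mc S(A_p)$ is contained in the closed convex envelope of $\mc M_p(A)$ (Krein--Milman, plus the observation that a weak*-continuous convex function has the same supremum over a set and over its closed convex hull), so that half is complete and correct.

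The gap is exactly where you place it: the containment $\mc M_p(A)\subseteq\mc S(A_p)$ is announced but not proved --- what you give is a plan, not an argument. Two remarks. First, you have set yourself a harder target than necessary: for the sandwich it suffices that $\mc M_p(A)$ lie in the unit ball of $A_p^*$, i.e.\ that $\abs{f(a)}\leq\norm{\pi_p(a)}_{A_p}\leq p(a)$ for $f\in\mc M_p(A)$, since the lower bound is already supplied by $\ext\mc S(A_p)\subseteq\mc M_p(A)$ alone; full statehood of $f$ on $A_p$ is not needed. This weaker containment is precisely what the paper uses (and, for what it is worth, also asserts without proof). Second, the difficulty is real and your sketch does not close it: an $f\in\mc M_p(A)$ is a pure state of some $A_q$ with $q$ a priori incomparable to $p$, and Remark~\ref{S_q subseteq S_p} only propagates statehood \emph{upward} along the pointwise order, from $q$ to $p\vee q$. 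That yields $\norm f_{A_{p\vee q}^*}\leq 1$, which is \emph{weaker} than $\norm f_{A_p^*}\leq 1$ because $\set{p\vee q\leq 1}\subseteq\set{p\leq 1}$; so the ``common refinement'' moves in the wrong direction, and Mittag--Leffler density does not by itself transport the estimate back down to $A_p$. Until the bound $\norm f_{A_p^*}\leq 1$ for $f\in A_p^*\cap\mc{PS}(A_q)$ is actually established, the upper estimate $\norm a_{\mc M_p(A)}\leq p(a)$ remains unjustified.
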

\begin{proof}
Since $\mc M_p(A)\subseteq \set{\norm f_{A_p^*}\leq 1}$, we have $\norm{\ }_{\mc M_p(A)}\leq p$.
%Note that
%\(
%\norm a_{\mc M_p(A)} \leq \sup_{\norm{f}_{A_p^*}\leq 1} \abs{f(a)} = p(a).
%\)
On the other hand,
 $\mc S(A_p)$ is contained in the closure of the convex envelope of $\mc M_p(A)$, and thus
Proposition \ref{BohnKarl} implies 
 that $p\precsim\norm{\ }_{\mc M_p(A)}$\Loo. 
\end{proof}

\begin{thm} 
Let $\cat{sAlg}$ be the subcategory of those $A\in\cat{amAlg}$ such that 
\[
\bigl( \forall p,q\in \mc P(A) \bigr)\  A_p^*\cap A_q^* = A_{p\wedge q}^*\Loo.
\]
%for all $p, q\in\mc P(A)$.
%Equivalently,
%\[
%(\forall p,q\in P)(\forall f\in A_p^*\cap A_q^*)(\exists r\in P)\ f\in A_r^*\subseteq A_p^*\cap A_q^*\Lo.
%\]
Then, 
\[
\mc M(A)=\mc{PS}(A),\quad \mc M_p(A) = A_p^*\cap\mc{PS}(A)  %\bigcup_{q\precsim p}\mc{PS}(A_q)
\]
define a spectral functor $\cat{sAlg}\rightarrow \cat{Conv}$.
\end{thm}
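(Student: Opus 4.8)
The plan is to take $\sigma(A)$ to be the family $\{\mc M_p(A)\}_{p\in\mc P(A)}$ of subsets of $\mc{PS}(A)$, viewed as a subcategory of $\cat{Subset}(\mc{PS}(A))$ via inclusions, and then to verify in turn that (i) it is a convex structure, (ii) the assignment $p\mapsto\mc M_p(A)$ satisfies the two axioms of Definition~\ref{spectral functor}, and (iii) the construction is functorial in $A$. The substance of (i) and (ii) reduces almost entirely to Corollary~\ref{p equiv norm M_p} and to the defining property of $\cat{sAlg}$, so I would dispatch these first and treat functoriality last, since I expect it to be the delicate point.

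For the convex structure axioms, covering is immediate from $A^*=\bigcup_p A_p^*$, since $\bigcup_p\mc M_p(A)=\bigl(\bigcup_p A_p^*\bigr)\cap\mc{PS}(A)=\mc{PS}(A)$; directedness follows from $p,q\precsim p\vee q$, which forces $\mc M_p(A),\mc M_q(A)\subseteq\mc M_{p\vee q}(A)$ once monotonicity is in hand; and closure under finite intersections is exactly axiom~2 below. For axiom~1 of Definition~\ref{spectral functor}, the implication $p\precsim q\Rightarrow\mc M_p(A)\subseteq\mc M_q(A)$ is clear because $p\precsim q$ gives $A_p^*\subseteq A_q^*$; conversely, if $\mc M_p(A)\subseteq\mc M_q(A)$ then $\norm{\ }_{\mc M_p(A)}\leq\norm{\ }_{\mc M_q(A)}$ as suprema over nested sets, and Corollary~\ref{p equiv norm M_p} upgrades this to $p\precsim\norm{\ }_{\mc M_p(A)}\leq\norm{\ }_{\mc M_q(A)}\leq q$, whence $p\precsim q$. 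For axiom~2 I would simply compute
\[
\mc M_p(A)\cap\mc M_q(A)=A_p^*\cap A_q^*\cap\mc{PS}(A)=A_{p\wedge q}^*\cap\mc{PS}(A)=\mc M_{p\wedge q}(A),
\]
the middle equality being precisely the hypothesis that carves out $\cat{sAlg}$. This also confirms the monotonicity invoked above, so $p\mapsto\mc M_p(A)$ is a morphism of directed sets.

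It remains to address functoriality, which I expect to be the main obstacle. A morphism $\phi\colon A\to B$ in $\cat{amAlg}$ is a continuous unital homomorphism, so for each $q\in\mc P(B)$ the seminorm $q\circ\phi$ is again continuous, submultiplicative and unital, i.e.\ $q\circ\phi\in\mc P(A)$, and $q\mapsto q\circ\phi$ is monotone for $\precsim$. I would attach to $\phi$ the assignment $\mc M_q(B)\mapsto\mc M_{q\circ\phi}(A)$; axiom~1 makes this well defined (equivalent seminorms give equal convex sets) and monotone, hence a genuine functor between the convex structures, with functoriality in $\phi$ coming from $(q\circ\psi)\circ\phi=q\circ(\psi\circ\phi)$. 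The point worth flagging is that this mechanism operates entirely at the level of convex sets indexed by seminorms, bypassing any point map of pure state spaces: the naive pullback $f\mapsto f\circ\phi$ does carry $\mc S(B_q)$ into $\mc S(A_{q\circ\phi})$, but it need not preserve extreme points, so pure states do \emph{not} transport pointwise, and it is exactly the abstract notion of coarse map that sidesteps this. The only genuine care needed is bookkeeping of variance: the seminorm pullback produces a functor $\sigma(B)\to\sigma(A)$, which is the expected contravariant behaviour of a spectrum, and matching this against the convention recorded in Definition~\ref{spectral functor} is the step I would pin down last.
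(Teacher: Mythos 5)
Your proof is correct and follows essentially the same route as the paper's: the convex-structure axioms and spectrality condition~2 come from the defining property of $\cat{sAlg}$, axiom~1 and well-definedness of the coarse map come from Corollary~\ref{p equiv norm M_p} via the equivalence $p\precsim q\Leftrightarrow\mc M_p(A)\subseteq\mc M_q(A)$, and functoriality is handled by the same seminorm pullback $q\mapsto q\circ\phi$. You spell out the details the paper leaves as ``clearly,'' and your observation that the coarse-map formalism deliberately avoids transporting pure states pointwise (since pullback need not preserve extreme points) is a correct and useful gloss on why the construction is set up this way.
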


\begin{proof}
Clearly, $\sigma(A) = \set{ \mc M_p(A) | p\in\mc P(A) }$ covers $\mc M(A)$, is directed, and is closed under finite intersections if $A\in\cat{sAlg}$; thus, it defines a convex structure on $\mc M(A)$, and we get that $\mc M:\cat{sAlg}\rightarrow \cat{Conv}$. Let us check functoriality: we must associate, to each morphism $\varphi:B\rightarrow A$, a coarse map $\mc M(A)\rightarrow \mc M(B)$, i.e.\ a functor $\sigma(A)\rightarrow \sigma (B)$. Our candidate is
\[
\mc M_p(A) \in \sigma (A) \mapsto \mc M_{p\circ\varphi}(B) \in \sigma(B),
\]
which is well-defined and functorial because, thanks to Corollary~\ref{p equiv norm M_p}, we have that 
\[
(\forall A\in \cat{amAlg})\bigl(\forall p,q\in \mc P(A)\bigr)\ p\precsim q \Leftrightarrow \mc M_p(A)\subseteq \mc M_q(A).
\]
This is also the first condition of spectrality, and the second one follows immediately form the fact that we  consider only algebras $A\in\cat{sAlg}$.
\end{proof}

\subsection{Fullness and $A$-convexity}

Fix once and for all a spectral functor $\mc M$. When there is no more than one algebra $A$ around, we will write $M=\mc M(A)$ and $M_p = \mc M_p(A)$. 

\begin{prop}
For any $m\in M$, there exists a unique  seminorm $p_m\in P$ which is minimal (with respect to $\leq$) for the property $m\in M_p$\loo. 
\end{prop}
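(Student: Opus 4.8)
The plan is to realize $p_m$ as the infimum of
\[
P_m = \Set{ p\in P | m\in M_p },
\]
and then to check that this infimum is actually attained. First I would record the structural properties of $P_m$. It is nonempty, since $M=\bigcup_p M_p$ supplies at least one $p$ with $m\in M_p$; and it is closed under $\wedge$, because the second spectrality axiom gives $m\in M_p\cap M_q = M_{p\wedge q}$ whenever $p,q\in P_m$. Thus $P_m$ is downward directed in $(P,\leq)$. This already yields uniqueness, and shows that any $\leq$-minimal element is in fact the minimum: if $p_m$ is minimal in $P_m$, then for every $q\in P_m$ one has $p_m\wedge q\in P_m$ with $p_m\wedge q\leq p_m$, so minimality forces $p_m\wedge q=p_m$, i.e.\ $p_m\leq q$; in particular two minimal elements coincide.

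Next I would construct the candidate explicitly. Put $p_m(a)=\inf_{q\in P_m} q(a)$, the pointwise infimum. The directedness of $P_m$ makes this a seminorm of the required kind: given $a,b$ and $\eps>0$, choose $q_1,q_2\in P_m$ nearly realizing the infimum at $a$ and at $b$, respectively, and pass to $q_1\wedge q_2\in P_m$, which lies below both; this delivers at once subadditivity and submultiplicativity, while homogeneity and unitality ($p_m(1)=1$) are immediate. Since $p_m\leq q_0$ for any fixed $q_0\in P_m$, saturation of $P$ gives $p_m\in P$. By construction $p_m$ is the greatest lower bound of $P_m$, so it only remains to prove $p_m\in P_m$.

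The crux is therefore the attainment $m\in M_{p_m}$, and here the finite-meet axiom does not suffice by itself; one must control the infimum uniformly. For this I would invoke Corollary~\ref{p equiv norm M_p}: for each $q\in P_m$ we have $\abs{m(a)}\leq\norm{a}_{M_q}\leq q(a)$, the first inequality because $m\in M_q$, the second by the corollary. Taking the infimum over $q\in P_m$ gives $\abs{m(a)}\leq p_m(a)$ for all $a$, so that $m$ is $p_m$-continuous; being already a point of $\mc M(A)$, it then lies in $\mc M_{p_m}(A)$ (for the pure-state functor this is precisely the statement $m\in A_{p_m}^*\cap\mc{PS}(A)$). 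Hence $p_m$ is the unique $\leq$-minimal---indeed the minimum---seminorm with $m\in M_{p_m}$. I expect the genuine obstacle to be exactly this last step: the membership of the infimum, reflecting the fact that $p\mapsto M_p$ must commute with the pertinent infima, which is secured here by the two-sided estimate of Corollary~\ref{p equiv norm M_p} rather than by the spectrality axioms alone.
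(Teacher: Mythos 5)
Your proof is correct and, for the pure state functor, actually more complete than the paper's, but it follows a genuinely different route. The paper gets existence from Zorn's lemma: it checks that a totally ordered $Q\subseteq P$ admits the pointwise infimum $\inf_{q\in Q}q(\cdot)$ as a lower bound in $P$ (by the same $\eps$-argument you use, with total order in place of directedness), and then deduces uniqueness from closure of $P$ under $\wedge$, exactly as you do. You instead build the minimum directly as the infimum over the $\wedge$-directed set $P_m=\{p\in P \mid m\in M_p\}$, trading the appeal to Zorn for a concrete formula. The real divergence is your last step: you observe that one must still prove $m\in M_{p_m}$, i.e.\ that $p\mapsto M_p$ respects the relevant infimum, and you secure this via the estimate $\abs{m(a)}\leq\norm{a}_{\mc M_q(A)}\leq q(a)$ from Corollary~\ref{p equiv norm M_p}. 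The paper's Zorn argument silently needs the analogous fact --- that the lower bound of a chain in $P_m$ still lies in $P_m$ --- and never verifies it, so you have isolated a genuine issue rather than introduced one. The price is generality: your attainment argument is specific to the pure state spectral functor ($M_p=A_p^*\cap\mc{PS}(A)$), whereas the proposition is stated for an arbitrary spectral functor, whose axioms (finite meets only) do not obviously control infinite infima. In that abstract generality neither proof closes this step; yours closes it in the one example the paper actually provides.
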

\begin{proof}
In order to use Zorn's lemma we only have to check, given a totally ordered subset $Q\subseteq P$, that  $\inf_{q\in Q} q(a)$  is a submultiplicative seminorm.
%(continuity and the requirement $q_0(1)=1$ are evident). 
Now, subadditivity will follow from the fact that
\[
\inf_{q\in Q} \bigl\{ q(a)+q(b) \bigr\} = \inf_{q\in Q} q(a) + \inf_{q\in Q} q(b).
\]
The left hand side is clearly greater than or equal to the right hand side. In order to prove equality, let $\eps>0$ and take seminorms $q_1, q_2\in Q$ such that $q_1(a) \leq \inf_{q\in Q} q(a)+\eps$ and $q_2(b) \leq \inf_{q\in Q} q(b) + \eps$. Since $Q$ is totally ordered, we can suppose without loss of generality that $q_1\leq q_2$\lo, and then
\[
\inf_{q\in Q} \bigl\{ q(a)+q(b) \bigr\} \leq q_1(a)+q_1(b) \leq \inf_{q\in Q} q(a) + \inf_{q\in Q} q(b) + 2\eps.
\]
Since $\eps>0$ is arbitrary, equality follows. This proves subadditivity, and submultiplicativity is done similarily. Uniqueness follows  from 
%the second condition in Definition \ref{spectral functor}.
the fact that $P$ is closed under $\wedge$.
\end{proof}

\begin{defn}
Given $m\in M$, write $O(m) =  M_{p_m}$\Loo.
We say that a subset $N\subseteq M$ is \emph{full} if 
\[
(\forall m\in N)\ O(m) \subseteq N.
\]
%When the choice of $M$ needs to be emphasized, we will say that $N$ \emph{is full in} $M$. 
The full subset of $M$ generated by $N$ will be written
\(
O(N) = \bigcup_{m\in N} O(m).
\)
\end{defn}
\begin{defn}
Given  $N\subseteq M$, we define
\[
\mc P(N) = \set {p\in P | M_p\subseteq N},
\]
(not to be confused with $\mc P(A)$ when $A$ is an algebra)
and we say that $N$ is $A$-convex if $\mc P(N)$ is directed. Note that this is the case if, and only if, given $p_1,p_2\in\mc P(N)$ one has that $p_1\vee p_2\in\mc P(N)$.
\end{defn}
\begin{rk}
If $N$ is full, then $N\subseteq \bigcup_{p\in\mc P(N)} M_p$\lo. If it is full and $A$-convex, then $N = \bigcup_{p\in\mc P(N)} M_p$\lo.
\end{rk}

\begin{ex}
$M$ is trivially full and $A$-convex. 
\end{ex}

\begin{prop}
The sets $M_p$ are full and $A$-convex.
\end{prop}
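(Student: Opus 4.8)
The plan is to reduce both assertions to the single structural fact, built into the definition of a spectral functor, that $p\precsim q$ holds if and only if $M_p\subseteq M_q$, combined with the minimality property of $p_m$ established just above. Neither part requires any genuine analysis; everything is lattice-theoretic once that correspondence is in hand.

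For fullness, I would fix $m\in M_p$ and aim to show $O(m)=M_{p_m}\subseteq M_p$. Since $m\in M_p$, the seminorm $p$ lies among those $q$ with $m\in M_q$, and $p_m$ is by the preceding proposition minimal (for the order $\leq$) with that property; hence $p_m\leq p$. As $\leq$ implies $\precsim$ (take $C=1$), this gives $p_m\precsim p$, and spectrality yields $M_{p_m}\subseteq M_p$. Thus $O(m)\subseteq M_p$ for every $m\in M_p$, which is exactly fullness of $M_p$.

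For $A$-convexity, I would first rewrite the relevant seminorm set. By the spectral correspondence, $\mc P(M_p)=\set{q\in P| M_q\subseteq M_p}=\set{q\in P| q\precsim p}$. By the remark in the definition of $A$-convexity, it suffices to check that this set is closed under $\vee$. So given $q_1,q_2\precsim p$, write $q_1\leq C_1 p$ and $q_2\leq C_2 p$; then $(q_1\vee q_2)(a)\leq\max\{C_1,C_2\}\,p(a)$ for all $a$, so $q_1\vee q_2\precsim p$ and hence $q_1\vee q_2\in\mc P(M_p)$. This makes $\mc P(M_p)$ directed, so $M_p$ is $A$-convex.

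There is no substantial obstacle here: both statements are immediate applications of the spectral functor axioms. The only point demanding a little care is keeping the two orders $\leq$ and $\precsim$ straight, since the minimality of $p_m$ is asserted for the finer relation $\leq$ while the inclusion $M_{p_m}\subseteq M_p$ is read off from the coarser relation $\precsim$; the implication $\leq\Rightarrow\precsim$ bridges the gap.
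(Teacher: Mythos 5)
Your proof is correct and follows essentially the same route as the paper's: fullness via the minimality $p_m\leq p$ (hence $p_m\precsim p$) together with the spectral correspondence, and $A$-convexity by noting that $q_1,q_2\precsim p$ forces $q_1\vee q_2\precsim p$. The only cosmetic difference is that you make explicit the identification $\mc P(M_p)=\set{q\in P\mid q\precsim p}$ and the bridge from $\leq$ to $\precsim$, which the paper leaves implicit.
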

\begin{proof}
Let $m\in M_p$\lo. Since $p_m \leq p$ (and, in particular $p_m\precsim p$), we have that $O(m)\subseteq M_p$\lo, showing that $M_p$ is full. For $A$-convexity, let $\{q_i\}_{i\in I}\subseteq\mc P(M_p)$, i.e.\ $(\forall i\in I)\ M_{q_i}\subseteq M_p$\lo. Since $\mc M$ is spectral, we have that $(\forall i\in I)\ q_i\precsim p$. If $I$ is finite, then there exists a finite $C>0$ such that $(\forall i\in I)\ q_i\leq Cp$. Again, by spectrality we conclude that 
$\bigvee_{i\in I} q_i\precsim p$, finishing the proof.
\end{proof}

\begin{prop}
Finite intersections 
%of full (resp.\  $A$-convex) sets are full (resp.\  $A$-convex).
preserve both fullness and $A$-convexity.
\end{prop}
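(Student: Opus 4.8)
The plan is to reduce to the case of two sets — arbitrary finite intersections then follow by a routine induction — and to treat fullness and $A$-convexity separately, since each reduces to an elementary manipulation of the definitions.

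For fullness, I would argue directly. Suppose $N_1$ and $N_2$ are full and take $m\in N_1\cap N_2$. Because $m$ lies in the full set $N_1$, one has $O(m)\subseteq N_1$, and likewise $O(m)\subseteq N_2$; hence $O(m)\subseteq N_1\cap N_2$. This is immediate, and in fact the same line shows that fullness is preserved under \emph{arbitrary} intersections, the point being that $O(m)=M_{p_m}$ depends only on $m$ and not on the ambient set.

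For $A$-convexity, the key observation is the set-theoretic identity
\[
\mc P(N_1\cap N_2) = \mc P(N_1)\cap \mc P(N_2),
\]
which follows at once from the definition $\mc P(N)=\set{p\in P | M_p\subseteq N}$: a seminorm $p$ satisfies $M_p\subseteq N_1\cap N_2$ precisely when $M_p\subseteq N_1$ and $M_p\subseteq N_2$. Granting this, I would invoke the characterization recorded just after the definition of $A$-convexity, namely that $N$ is $A$-convex if, and only if, $\mc P(N)$ is closed under the binary join $\vee$. If $N_1$ and $N_2$ are $A$-convex and $p_1,p_2\in\mc P(N_1)\cap\mc P(N_2)$, then $p_1\vee p_2$ lies in $\mc P(N_1)$ (by $A$-convexity of $N_1$) and in $\mc P(N_2)$ (by $A$-convexity of $N_2$), hence in $\mc P(N_1)\cap\mc P(N_2)=\mc P(N_1\cap N_2)$. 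Thus $\mc P(N_1\cap N_2)$ is closed under $\vee$, i.e.\ $N_1\cap N_2$ is $A$-convex.

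I do not expect a genuine obstacle here: both assertions are one-line consequences of the definitions once the identity $\mc P(N_1\cap N_2)=\mc P(N_1)\cap\mc P(N_2)$ is noticed. The only point that even requires a moment's thought is that the intersection of two families each closed under $\vee$ is again closed under $\vee$, and this is automatic since $\vee$ is a binary operation applied to elements belonging to both families. Spectrality enters only implicitly, through the equivalence — already established in the excerpt — between directedness of $\mc P(N)$ and its closure under $\vee$.
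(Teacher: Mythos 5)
Your proof is correct and follows essentially the same route as the paper's: the $A$-convexity part rests on the identity $\mathcal P(N_1\cap N_2)=\mathcal P(N_1)\cap\mathcal P(N_2)$ and closure under $\vee$, exactly as in the paper, while the fullness part (which the paper dismisses as obvious) you spell out directly. Your added observation that fullness is in fact preserved under arbitrary intersections is accurate but does not change the argument.
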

\begin{proof}
%Fullness: let $m\in N_1\cap N_2$. One has $O(m)\subseteq N_1\cap N_2$. 
Fullness is obviously preserved.
Now, let $N_1,N_2\subseteq M$. It is clear that
\[
\mc P(N_1\cap N_2) = \mc P(N_1)\cap\mc P(N_2).
\]
Thus, given $p_1,p_2\in \mc P(N_1\cap N_2)$, if $N_1$ and $N_2$ are $A$-convex then
\[
p_1\vee p_2 \in \mc P(N_1)\cap\mc P(N_2) = \mc P(N_1\cap N_2),
\] 
i.e.\ $N_1\cap N_2$ is $A$-convex.
%We see that $A$-convexity is also preserved because the intersection of directed, saturated subsets of $\mc P$ is again directed and saturated.
\end{proof}

\begin{rk}
Up to now, we have only used the first condition in the definition of spectrality. We will make essential use of the second one in the next section.
\end{rk}
%\rojo{
%Given $N\subseteq M$  \emph{bounded} (ie eventually contained in $M_p$) and $A$-convex, let $p$ be the smallest member of $ P $ which dominates
%\(
%\norm a_N = \sup_{m\in N} \abs{ m(a) }.
%\)
%Is $p$ well-defined?
%Does $N=M_p$ hold?
%}

\subsection{Schwartz schemes}

Loosely speaking, a \emph{scheme} is a sheaf $\mc A:\cat{Op}(M)\rightarrow \cat{Alg}$ from the category of open subsets of a topological space $M$ (with inclusions as morphisms) to the category of algebras which is such that:
\begin{enumerate} 
	\item $\mc A(U)$ is an algebra of observables over $U$.
	\item Every point $m\in M$ has a neighborhood $U$ that can be uniquely reconstructed (up to homeomorphism) as the spectrum of $\mc A(U)$. 
\end{enumerate}
%Here we introduce certain class of schemes which, as we will show in the next section, %generalize to the non-commutative setting the schemes of analytic functions over a complex analytic space.
%provide a non-commutative generalization of the schemes of analytic functions over a complex analytic space.
Thus, to start with, we need a suitable topology on $M$.

%\subsubsection{Topologizing $M$}

%We start by introducing a topology on $M$ induced from the topological structure of $A$, which plays a role parallel to that of the Zariski topology in algebraic geometry. We need some preliminary notions.

\begin{defn}
Given a seminorm $p\in P$,
we say that $p$ is \emph{strictly dominated} by $q\in P$,
written $p\prec q$, whenever $p\precsim q$ and $A_q\rightarrow A_p$ is compact. 
If there exists such a $q$, we say that $p$ is \emph{compact.}
%A seminorm $p\in P$ is \emph{compact} if there exists $q\in P$ such that $p\precsim q$ and $A_q\rightarrow A_p$ is compact, in which case we say that $p$ is \emph{strongly dominated} by $q$,
%written $p\prec q$. 
\end{defn}
\begin{rk}
By Schauder's theorem, $A_q\rightarrow A_p$ is compact if, and only if, $A_p^*\rightarrow A_q^*$ is compact, i.e.\ 
\[
\Set{ \norm{\ }_{A_p^*} \leq 1 } = \Set{\norm{\ }_{A_p}\leq 1}^\circ \text{ is precompact in } A_q^*\Loo,
\]
where $(\cdot)^\circ$ denotes the polar of its argument. %(we are identifying $A_q^*$ with $\pi_q^*(A_q^*) \subseteq A^*$). 
Now, by the Mittag-Leffler theorem, $\set{p\leq 1}$ is dense in $\set{\norm{\ }_{A_p} \leq 1}$, and thus $p\prec q$ if, and only if, $\Set{p\leq 1}^\circ$ is precompact in $A_q^*$\Loo.
\end{rk}

\begin{defn}
A subset $Q\subseteq P$ is said to be \emph{strictly increasing} if 
\[
(\forall p\in Q)(\exists q\in Q)\ p\prec q.
\]
If $P$ itself is strictly increasing, then $A$ is said to be a \emph{Schwartz space.}
\end{defn}

From now on, $\cat{sAlg}$ will be the subcategory of those $A\in\cat{amAlg}$ which are Schwartz spaces and satisfy the condition
\[
(\forall p,q\in P)\ A_p^*\cap A_q^* = A_{p\wedge q}^*\Loo.
\]
So, suppose that $A\in\cat{sAlg}$.
We are in position to introduce the desired topology on $M$. Given a strictly increasing subset $Q\subseteq P$, we let
\[ %begin{equation} \label{U_Q}
U_Q = \bigcup_{p\in Q} M_p\subseteq M.
\] %end{equation}

\begin{prop}
The family $\Set{U_Q | Q\subseteq P \text{ strictly increasing}}$  is closed under finite intersections. It is, in particular, a basis of open sets for a topology on $M$.
\end{prop}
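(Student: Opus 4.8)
The plan is to reduce everything to binary intersections and a single key lemma about the strict domination $\prec$. Given strictly increasing $Q_1, Q_2 \subseteq P$, I would first compute, using the second spectrality axiom $M_{p \wedge q} = M_p \cap M_q$, that
\[
U_{Q_1} \cap U_{Q_2} = \bigcup_{p \in Q_1,\, q \in Q_2} (M_p \cap M_q) = \bigcup_{p \in Q_1,\, q \in Q_2} M_{p \wedge q} = U_Q,
\]
where $Q = \set{ p \wedge q | p \in Q_1,\ q \in Q_2 }$. It then remains to see that $Q$ is strictly increasing, which reduces matters to the following key lemma: \emph{if $p \prec p'$ and $q \prec q'$, then $p \wedge q \prec p' \wedge q'$.} Indeed, given $p \wedge q \in Q$, the strict increase of $Q_1$ and $Q_2$ furnishes $p' \in Q_1$, $q' \in Q_2$ with $p \prec p'$, $q \prec q'$, and the lemma produces the required strict domination inside $Q$.

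For the lemma, the domination $p \wedge q \precsim p' \wedge q'$ is immediate from the first spectrality axiom, since $M_{p \wedge q} = M_p \cap M_q \subseteq M_{p'} \cap M_{q'} = M_{p' \wedge q'}$. The substance is the compactness of $A_{p' \wedge q'} \to A_{p \wedge q}$, which I would phrase dually using the remark that $p \prec q$ if, and only if, the polar $B_p := \set{p \leq 1}^\circ$ is precompact in the norm of $A_q^*$. Writing $B_r = \set{r \leq 1}^\circ$ for the unit ball of $A_r^*$ inside $A^*$, the inequalities $p \wedge q \leq p$ and $p \wedge q \leq q$ give $B_{p \wedge q} \subseteq B_p \cap B_q$. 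By hypothesis $B_p$ is precompact in $A_{p'}^*$ and $B_q$ is precompact in $A_{q'}^*$; hence, viewing $B_p \cap B_q$ diagonally inside $A_{p'}^* \times A_{q'}^*$ as a subset of $B_p \times B_q$, it is precompact for the norm $\max(\norm{\cdot}_{A_{p'}^*}, \norm{\cdot}_{A_{q'}^*})$.

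The crux, and the step I expect to be the main obstacle, is to pass from precompactness in this intersection norm to precompactness in the genuine norm of $A_{p' \wedge q'}^*$, which a priori is larger: from $p' \wedge q' \leq p'$ and $p' \wedge q' \leq q'$ one only gets $\norm{\cdot}_{A_{p' \wedge q'}^*} \geq \max(\norm{\cdot}_{A_{p'}^*}, \norm{\cdot}_{A_{q'}^*})$, and precompactness in a stronger norm is a stronger demand. Here the defining condition of $\cat{sAlg}$ enters decisively: it yields $A_{p' \wedge q'}^* = A_{p'}^* \cap A_{q'}^*$ as sets. Now the dual norm of the Banach space $A_{p' \wedge q'}$ and the intersection norm $\max(\norm{\cdot}_{A_{p'}^*}, \norm{\cdot}_{A_{q'}^*})$ are two complete norms on this common space (completeness of the latter follows because the inclusions $A_{p'}^*, A_{q'}^* \hookrightarrow A^*$ are continuous, so limits are identified), and one dominates the other; the open mapping theorem then forces them to be equivalent. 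Consequently precompactness in the intersection norm is precompactness in $A_{p' \wedge q'}^*$, so $B_{p \wedge q} \subseteq B_p \cap B_q$ is precompact there, giving $p \wedge q \prec p' \wedge q'$ and completing the lemma.

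Finally, closure under arbitrary finite intersections follows by induction on the number of factors. To conclude that the family is a basis it then suffices to note that it covers $M$: since $A$ is a Schwartz space, $P$ itself is strictly increasing, so $M = \bigcup_{p \in P} M_p = U_P$ belongs to the family. Together with closure under pairwise intersection, this is precisely the criterion for $\set{U_Q | Q \subseteq P \text{ strictly increasing}}$ to be a basis of a topology on $M$.
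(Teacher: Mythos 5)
Your proof is correct and follows essentially the same route as the paper's: reduce closure under intersection to showing that the set of meets $\bigl\{ p\wedge q \bigr\}$ is strictly increasing, pass to polars, and invoke the defining condition $A_{p'}^*\cap A_{q'}^* = A_{p'\wedge q'}^*$ of $\cat{sAlg}$. The one point where you go beyond the paper is in justifying explicitly, via completeness of the intersection norm and the open mapping theorem, that precompactness for $\max\bigl(\norm{\cdot}_{A_{p'}^*},\norm{\cdot}_{A_{q'}^*}\bigr)$ transfers to precompactness for the a priori stronger norm of $A_{p'\wedge q'}^*$ --- a step the paper's proof leaves implicit when it asserts precompactness ``in $\bigcap A_{p_i}^* = A_{\bigwedge p_i}^*$''.
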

\begin{proof}
By spectrality,  $\mc M$ does not play any role here: given a finite family $\{Q_i\}_{i\in I}$ of strictly increasing subsets of $P$, the statement is equivalent to
\[
Q = \Set{ \bigwedge q_i | q_i\in Q_i}
\]
being strongly inceasing. So, let $p_i\lo, q_i\in Q_i$ be such that $q_i\prec p_i$\lo, i.e.\ 
$\Set{q_i\leq 1}^\circ$ is precompact in $A_{p_i}^*$\LOO. Then,
\[
\Set{\bigwedge q_i \leq 1}^\circ \subseteq  \bigcap \Set{q_i\leq 1}^\circ \text{ is precompact in }\bigcap A_{p_i}^* = A_{\bigwedge p_i}^*\LOO,
\]
concluding the proof. 
\end{proof}

%\subsubsection{Schwartz schemes}

We now recall the basic definitions in sheaf theory.
\begin{defn}
Let $\cat{Cat}$ be some category and $X\in\cat{Top}$ a topological space.
A $\cat{Cat}$-valued \emph{presheaf} on  $X$ is a contravariant functor $\mc F: \cat{Op}(X)\rightarrow \cat{Cat}$. 
If $U,V\in\cat{Op}(X)$, $V\subseteq U$ and $f\in\mc F(U)$, then the image of $f$ under the morphism $\mc F(U)\rightarrow \mc F(V)$ is conveniently written $f|_V$\lo.
We will also write, given $U\in\cat{Op}(X)$,
\[
\mc F|_U = \cat{Op}(U)\rightarrow \cat{Op}(X)\xrightarrow{\mc F} \cat{Cat}.
\]
Finally, $\mc F$ is said to be a \emph{sheaf} if it is:
\begin{enumerate}
	\item \emph{Separated:} if $f\in \mc F(\bigcup U_i)$ is such that $f|_{U_i} = 0$, then $f=0$.
	\item \emph{Complete:} if $f_i\in \mc F(U_i)$ are such that $f_i|_{U_i\cap U_j} = f_j|_{U_i\cap U_j}$\Lo, then there exists an $f\in \mc F\left(\bigcup U_i\right)$ such that $f|_{U_i} = f_i$\lo.
\end{enumerate}
\end{defn}

We emphasize that we are assuming that a choice of spectral functor 
\(
\mc M %:P\rightarrow\cat{Subset}(M)
\)
has been made. The forthcoming definitions are dependent on this choice. 
% and that fact will not be apparent in our notation.

\begin{prop}
Given $U\in \cat{Op}(M)$, let
\(
\mc O_A(U) = %\varprojlim \Set{ A_p | p\in \hat{\mc P}(U) }.
	\lim_{p\in \mc P(U)} A_p\lo.
\)
This defines %a presheaf of Arens-Michael algebras, i.e.\ 
a separated presheaf $\mc O_A: \cat{Op}(M)\rightarrow \cat{sAlg}$. %Here, $\cat{Op}(M)$ is the category of open sets of $M$ and $\amAlg$ the category of (unital) Arens-Michael algebras.
\end{prop}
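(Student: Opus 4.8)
The plan is to build $\mc O_A$ in three stages: fix the functor structure, check that each $\mc O_A(U)$ genuinely lands in $\cat{sAlg}$, and then prove separatedness. For the functor structure, the key observation is that $V\subseteq U$ forces $\mc P(V)\subseteq\mc P(U)$, so restricting a compatible family $(a_p)_{p\in\mc P(U)}$ to the sub-index-set $\mc P(V)$ defines a continuous unital algebra morphism $\mc O_A(U)\to\mc O_A(V)$; since these restrictions are honest coordinate projections of projective limits, compatibility with composition and with identities is immediate, giving a contravariant functor. That $\mc O_A(U)$ is Arens--Michael is automatic, being a reduced projective limit of the Banach algebras $A_p$. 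I would then identify its Banach pieces: the index-restriction map $A\to\mc O_A(U)$ composed with evaluation at $p\in\mc P(U)$ is exactly $\pi_p$, which has dense range by the abstract Mittag--Leffler theorem, so $\mathrm{ev}_p:\mc O_A(U)\to A_p$ has dense range and $(\mc O_A(U))_{\tilde p}=A_p$ for the pullback seminorm $\tilde p$. Because $(p_1\vee p_2)(a)=\max(p_1(a),p_2(a))$ already holds on $A$, the continuous seminorms of $\mc O_A(U)$ are, up to $\precsim$, the finite joins $\tilde p_1\vee\cdots\vee\tilde p_m$ with $p_j\in\mc P(U)$, whose Banach pieces are the $A_{p_1\vee\cdots\vee p_m}$.

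With this dictionary the $\cat{sAlg}$-condition $A_p^*\cap A_q^*=A_{p\wedge q}^*$ transfers directly, since $p\wedge q\precsim p$ gives $M_{p\wedge q}=M_p\cap M_q\subseteq U$, so $p\wedge q\in\mc P(U)$ and the duals involved are literally those of $A$. The delicate point is the Schwartz property of $\mc O_A(U)$: every continuous seminorm must be strictly dominated by another. Two structural facts make this tractable — that $\wedge$ preserves $\prec$ (proven in the text) and that $\vee$ does too, which I would obtain by polars, noting that $\{p\vee q\le 1\}^\circ$ is the closed convex hull of $\{p\le1\}^\circ\cup\{q\le1\}^\circ$ and hence precompact in $A_{p'\vee q'}^*$ whenever $p\prec p'$ and $q\prec q'$. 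It then suffices to strictly dominate a single $\tilde p$ with $p\in\mc P(U)$. Writing $M_p\subseteq U=U_Q$ with $Q$ strictly increasing, the compact transition maps of the Schwartz space $A$ make $M_p$ precompact in $A_q^*$ whenever $p\prec q$; I expect the main obstacle to be using this compactness, together with openness of $U_Q$, to interpolate a finite join $q'=r_1\vee\cdots\vee r_k$ of elements $r_i\in Q$ with $M_p\subseteq M_{q'}$ (the difficulty being precisely that the pieces $M_r$ need not be open, so extracting a finite subfamily demands a genuine compactness input rather than a naive subcover). Granting this, spectrality gives $p\precsim q'$; choosing $r_i\prec r_i'\in Q$ and $q''=r_1'\vee\cdots\vee r_k'$ yields $q'\prec q''$ by stability of $\prec$ under $\vee$, and since $p\precsim q'\prec q''$ factors $A_{q''}\to A_{q'}\to A_p$ through a compact map we conclude $p\prec q''$, with $\tilde{q''}$ a bona fide seminorm of $\mc O_A(U)$.

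For separatedness, suppose $f=(f_p)_{p\in\mc P(U)}\in\mc O_A(U)$ with $U=\bigcup_i U_i$ and $f|_{U_i}=0$, i.e. $f_p=0$ for every $p\in\mc P(U_i)$. Fixing an arbitrary $p\in\mc P(U)$, I claim $f_p=0$, and by Corollary~\ref{p equiv norm M_p} it is enough to show $g(f_p)=0$ for every $g\in M_p$. Given such $g$, from $M_p\subseteq U=\bigcup_i\bigcup_{r\in Q_i}M_r$ we get $g\in M_r$ for some $r\in Q_i\subseteq\mc P(U_i)$. The $\cat{sAlg}$-condition gives $g\in A_p^*\cap A_r^*=A_{p\wedge r}^*$, so $g$ factors through $A_{p\wedge r}$, and compatibility of $f$ along $p\wedge r\precsim p,r$ yields $g(f_p)=g(f_{p\wedge r})=g(f_r)$, which vanishes because $r\in\mc P(U_i)$. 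Hence $\norm{f_p}_{M_p}=0$ and $f_p=0$, so $f=0$. In summary, everything but the compactness step of the second paragraph is bookkeeping with projective limits and the two spectrality axioms; that interpolation step is the one place where the Schwartz hypothesis on $A$ is genuinely indispensable, and it is where I would concentrate the technical work.
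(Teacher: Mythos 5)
Your argument splits into three parts, and they fare differently against the paper's own proof. Functoriality is handled the same way (restriction along $\mc P(V)\subseteq\mc P(U)$). For separatedness you take a genuinely different route: the paper argues purely with seminorms, identifying $\mc O_A\bigl(\bigcup U_i\bigr)$ with $\lim_{p\in\bigvee Q_i}A_p$ and computing $p(a)=\max q_i(a)=0$ for $p=q_1\vee\dots\vee q_n$, whereas you fix an arbitrary $p\in\mc P(U)$ and kill $f_p$ by testing against $M_p$, via Corollary~\ref{p equiv norm M_p} and the identity $M_p\cap M_r=M_{p\wedge r}$. Your version is sound and in one respect does more: it disposes of every index in $\mc P(U)$ and of arbitrary covers, while the paper's computation only reaches the seminorms in $\bigvee Q_i$ and is written for finite covers. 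The price is that your argument is tied to the pure-state functor: for an abstract spectral functor the elements of $M_p$ need not be functionals on $A_p$ and Corollary~\ref{p equiv norm M_p} is unavailable, while the paper's seminorm computation never looks inside $M_p$. Since the section fixes an arbitrary spectral functor $\mc M$, this is a real (if, for the paper's only example, harmless) loss of generality.

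The genuine gap is the one you flag yourself: the verification that $\mc O_A(U)$ is a Schwartz space. To strictly dominate $\tilde p$ you must produce $q$ with $p\prec q$ and $\tilde q$ continuous on $\mc O_A(U)$, i.e.\ $q\precsim$ a finite join of elements of $\mc P(U)$; the Schwartz property of $A$ gives some $q\succ p$ with no control on $M_q$, and, as you observe, the covering $M_p\subseteq\bigcup M_r$ cannot be refined to a finite subcover by naive topology because the sets $M_r$ are not open. ``Granting this'' is therefore doing real work: the precompactness of $M_p$ in $A_q^*$ does not obviously interact with the sets $M_r$ in the way you need, and your auxiliary claim that $\vee$ preserves $\prec$ also needs care (the polar of $\set{p\vee q\leq 1}$ is the weak-$*$ closed convex hull of the union of the two polars, and one must check that this weak-$*$ closure remains norm-precompact in $A_{p'\vee q'}^*$). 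A similar caveat applies to the ``direct transfer'' of the condition $A_p^*\cap A_q^*=A_{p\wedge q}^*$, since the meet computed in the seminorm lattice of $\mc O_A(U)$ need not coincide with the meet computed in $\mc P(A)$. To be fair, the paper offers no help on any of this --- it declares membership in $\cat{sAlg}$ straightforward without argument --- so you have not missed an idea the paper supplies; but as written your proof of that clause is incomplete, and the interpolation lemma (every $p\in\mc P(U)$ is $\precsim$ a finite join of compact elements of $\mc P(U)$) is the statement you would need to isolate and prove.
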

\begin{proof}
It is straightforward that $\mc O_A(U)\in\cat{sAlg}$. 
Functoriality follows from the fact that %$U\mapsto \hat{\mc P}(U)$ 
$\mc P$ is a functor from  $\cat{Op}(M)$ to the category $\cat{Dir}(P)$ of directed subsets of $P$. Finally, in order to prove separatedness, let $a\in \mc O_A\bigl(\bigcup_{i=1}^n U_i\bigr)$ with $U_i = \bigcup_{p\in Q_i} M_p$\lo, $Q_i\subseteq P$. Thus,
\[
\mc O_A\bigl(\bigcup U_i\bigr) = \lim_{p\in\bigvee Q_i} A_p\lo.
\]
Suppose that $a|_{U_i} = 0$, i.e.\ $q(a)=0$, for all $q\in Q_i$\lo, and let $p\in\bigvee Q_i$\lo, say $p=q_1\vee\cdots\vee q_n$ with $q_i\in Q_i$\lo. Obviously,
\(
p(a) = \max q_i(a)=0,
\)
which proves that $a=0$.
\end{proof}
\begin{rk}
If $U\in\cat{Op}(M)$ is full and $A$-convex, then $\mc M(\mc O_A(U)) = U$.
\end{rk}
\begin{rk}
%\rojo{If add Frechet [or barreled] to hypothesis, then need fact that separated quotient of Frechet [barreled] is Frechet [barreled] (p. 49 [p. 61] of Schaefer). If add topologically finitely generated, no problem (check out!)}.
One might want to have similar results for subcategories of $\cat{amAlg}$ other than $\cat{sAlg}$. For instance, if $A$ is Fréchet, one would like the algebras $\mc O_A(U)$ to be Fréchet, too. This is ensured by the fact that the separated quotient of Fréchet spaces is Fréchet. The same is true if we replace barreled for Fréchet, see~\cite{m:Scha71} for both assertions.
\end{rk}

We are ready to state the main definition of this section.

\begin{defn}
A \emph{Schwartz scheme} is a sheaf $\mc A:\cat{Op}(M)\rightarrow \cat{sAlg}$ such that every point $m\in M$ has a neighborhood $U$ satisfying 
\[
\mc A|_U \cong \mc O_{\mc A(U)}\Loo.
\]
Note that this means, in particular, that
\(
U \cong \mc M(\mc A(U)).
\)
An \emph{affine Schwartz scheme} is one such that $\mc A=\mc O_A$\lo, with $A\cong\mc A(M)$.
\end{defn}
\begin{rk}
Given  $A\in\cat{sAlg}$,
the presheaf $\mc O_A$ might not be complete. Thus, an important problem would be to find reasonable conditions ensuring that it is. 
\end{rk}
\begin{rk}
As it follows from Theorem~\ref{reconstruction}, there is a bijective correspondence between complex manifolds and Schwartz schemes of algebras satisfying the geometric hypothesis of Subsection~\ref{geometric hypothesis}.
\end{rk}

\appendix

\bibliographystyle{amsplain}
\bibliography{math}

\end{document}